 \newtheorem{thm}{Theorem}[section]
 \newtheorem{prop}[thm]{Proposition}
 \theoremstyle{definition}
 \newtheorem{oq}[thm]{Open Question}
 \theoremstyle{remark}
 \newtheorem{rem}[thm]{Remark}
 \newtheorem{ex}[thm]{Example}
 \numberwithin{equation}{section}
\begin{document}

\title{Permutability degrees of finite groups}

\author[D.E. Otera]{D.E. Otera$^\dagger$}

\author[F.G. Russo]{F.G. Russo$^{\dagger\dagger}$}

\subjclass[2010]{Primary 20P05; Secondary 06B23, 20D60}
\keywords{Subgroup commutativity degree; commutativity degree; permutability degree; dihedral groups; inequalities}
\date{\today}

\thanks{$^\dagger$ Institute of Mathematics and Informatics, Vilnius University, Vilnius, Lithuania;
email: daniele.otera@gmail.com\\
$^{\dagger\dagger}$ Department of Mathematics and Applied Mathematics, University of Cape Town, Cape Town, South Africa; email: francescog.russo@yahoo.com}

\begin{abstract}Given a finite group $G$, we introduce the \textit{permutability
degree} of $G$, as $$pd(G)=\frac{1}{|G| \ |\mathcal{L}(G)|}
{\underset{X \in \mathcal{L}(G)}\sum}|P_G(X)|,$$ where
$\mathcal{L}(G)$ is the subgroup lattice of $G$ and $P_G(X)$ the
permutizer of the subgroup $X$ in $G$, that is, the subgroup
generated by all cyclic subgroups of $G$ that permute with $X\in
\mathcal{L}(G)$. The number $pd(G)$ allows us to find some
structural restrictions on $G$. Successively, we investigate the
relations between $pd(G)$, the probability of commuting subgroups
$sd(G)$ of $G$ and the probability of commuting elements $d(G)$ of
$G$. Proving some inequalities between $pd(G)$, $sd(G)$ and $d(G)$, we
correlate these  notions. 
\end{abstract}

\maketitle


\section{Introduction}
All the groups of the present paper are supposed to be finite.
Given a group $G$ and its subgroup lattice $\mathcal{L}(G)$, the
\textit{subgroup commutativity degree}
$$sd(G)=\frac{|\{(H,K)\in \mathcal{L}(G) \times \mathcal{L}(G) \ | \ HK =
KH\}|}{|\mathcal{L}(G)|^2} $$ of $G$ and the \textit{commutativity degree}
$$d(G)=\frac{|\{(x,y)\in G \times G \ | \ xy = yx\}|}{|G|^2}$$ of $G$ have been largely
studied  in the last years. Fundamental properties and interesting
generalizations of $sd(G)$ can be found in \cite{far, far2, far3,
otera, mt, mtbis, mttris}, and for $d(G)$ in \cite{alghamdibis, alghamdi,
britnell, elr, er3,  er4, gr, hofmann-russo1, hofmann-russo2, les1,
les2, russo1}. To study these notions,
various perspectives have been considered in literature, because
both measure theory and combinatorial techniques may be applied in
order to get restrictions on the structure of a group.

The present paper investigates a similar concept, the
$permutability$ $degree$ of $G$
$$ pd(G)= \frac{1}{|G| \ |\mathcal{L}(G)|}{\underset{X\in
\mathcal{L}(G)}\sum}|P_G(X)|$$
and its connections with  $sd(G)$ and $d(G)$. In the previous
formula, the $permutizer$ $P_G(X)$ of a subgroup $X$  of $G$ is defined to
be the subgroup generated by all cyclic subgroups of $G$ that
permute with $X$, that is,
$P_G(X)= \langle g \in G \ | \ \langle g \rangle X = X \langle g \rangle \rangle.$
This means that $X \in \mathcal{L}(P_G(X))$ and $X\not=P_G(X)$ if and only if $X\langle g \rangle =\langle g \rangle X$ for some $g \in
G - X$.

We concentrate on permutizers because several classifications are
available in literature on this topic. Recall that a group $G$
such that $X\not= P_G(X)$ for every proper subgroup $X$ of $G$ is
said to satisfy the $permutizer$ $condition$ $\mathbf{P}$, or
briefly  $\mathbf{P}$--$group$.  Therefore the permutizer
condition generalizes the well--known $normalizer$ $condition$
(see \cite{rs}) and gives information on how the group is near to
be supersolvable. The study of  permutizers is not new and
it is based on a series of fundamental contributions
\cite{br,  wang1, wang2, zhang} in the last 20 years.
From \cite[Corollary 2]{br}, we know that for groups of odd order
the permutizer condition is equivalent of being supersolvable and
actually, a complete classification of $\mathbf{P}$--groups can be
found in \cite{br}.

Now we may define the subgroup
$$P(G)={\underset{H \in \mathcal{L}(G)}\bigcap P_G(H)}$$
and correlate it with other subgroups of $G$. For instance, it is
easy to check that the \textit{norm} $N(G)$ of $G$  (see \cite{rs}
for the properties of $N(G)$) satisfies the following relation
$$Z(G)= {\underset{x \in G}\bigcap C_G(x)} \subseteq
N(G)={\underset{H \in \mathcal{L}(G)}\bigcap N_G(H)}\subseteq
{\underset{H \in \mathcal{L}(G)}\bigcap P_G(H)}=P(G).$$ This
relation emphasizes how $P(G)$ is connected with other subgroups,
widely investigated in literature, such as intersections of
normalizers or of centralizers. Note that for any $X \in \mathcal{L}(P(G))$ one has $P_G
(X)=G$.

The subgroup $P(G)$ is also important because it allows us to
``manipulate" the expression of $pd(G)$, for getting some
analogies with
$$sd(G)=\frac{1}{|\mathcal{L}(G)|^2} \
{\underset{H\in \mathcal{L}(G)}\sum}|\mathcal{C}_{\mathcal{L}(G)}(H)| \ \ \mathrm{and} \ \
d(G)=\frac{1}{|G|^2} \sum_{x \in G}|C_G(x)|,$$ where $P_G(X)$ is the natural substitute of $\mathcal{C}_{\mathcal{L}(G)}(X)=\{Y \in \mathcal{L}(G) \ | \
YX=XY\}$ in \cite{otera,mt} and of  $C_G(x)=\{y \in G \ | \ xy=yx\}$  in \cite{alghamdi,elr}.  This manipulation of the expression of $pd(G)$ will allow us to detect whether $P(G)$ is cyclic or not by looking only at the size of $pd(G)$.

\section{Basic properties and terminology}
Some of the following observations will be useful later on.

\begin{rem}\label{rem0}Since  $P_G(X)$ is a subgroup of $G$,
for all $X \in \mathcal{L}(G)$, and it always contains the trivial
subgroup, then $|P_G(X)|\le |G|$ and also $$0< {\underset{X\in
\mathcal{L}(G)}\sum}|P_G(X)|\le |\mathcal{L}(G)| \ |G|$$ so that
$pd(G)\in ]0,1]$.\end{rem}

\begin{rem}\label{rem1}A group $G$ has $pd(G)=1$ if and only if the sum
of all $|P_G(X)|$ for $X \in \mathcal{L}(G)$ is equal to
$|G||\mathcal{L}(G)|$. By default, a \textit{quasihamiltonian
group} $G$, that is, a group in which every subgroup is
permutable, has $pd(G)=1$. A classification of quasihamiltonian
groups can be found in \cite[Theorems 2.4.11 and 2.4.16]{rs} and,
roughly speaking, these groups are direct products of abelian
groups by a copy of the quaternion group of order 8. In particular, abelian groups have permutability degree equal to
1.
\end{rem}

Another case in which the permutability degree reaches 1 is the
following.

\begin{rem}\label{rem2}A $\mathbf{P}$--group $G$ in
which all proper subgroups are maximal has $pd(G)=1$. In such a
case  for all proper subgroups $X$ of $G$ one has $X \subset
P_G(X)=G$ and so $pd(G)=1$. One might be tempted to think that
 all $\mathbf{P}$--groups have permutability degree equal to 1, but
Example \ref{d8} below shows this is false, and then the
additional condition ``in which all proper subgroups are maximal"
cannot be omitted.
\end{rem}

Now we  rewrite the original expression
of permutability degree in the following more useful form. Since
$\mathcal{L}(P(G))$ is a sublattice of $\mathcal{L}(G)$, it turns
out that
\begin{equation}\label{e:1} pd(G)= \frac{1}{|G||\mathcal{L}(G)|} \ \left({\underset{X\in
\mathcal{L}(P(G))}\sum}|P_G(X)| + {\underset{X\in \mathcal{L}(G)-
\mathcal{L}(P(G))}\sum}|P_G(X)| \right).
\end{equation}

Note also that a cyclic group $C$ (or better a quasihamiltonian
group $Q$)  has $sd(C)=pd(C)=d(C)=1$ (or better $sd(Q)=pd(Q)=1$).
Therefore, relations between $sd(G)$ and $pd(G)$ are meaningful
when $G$ is noncylic and nonquasihamiltonian (see Remark
\ref{rem1}).

For the sake of completeness, we recall some results of Beidleman and
Heineken in \cite{bh}. The $quasicenter$ $Q(G)$ of $G$ is the
subgroup of $G$ generated by all elements $g\in G$ such that
$\langle g\rangle K=K\langle g \rangle$, where $K$ is an arbitrary
subgroup of $G$. The subgroup $Q(G)$ was introduced by Mukherjee
and studied by several authors in the last years (see \cite{bh,
r1,  r3}), who investigated chains of quasicenters and relations
with supersolvable groups. On the other hand, the
$hyperquasicenter$ of $G$, denoted by $Q_\infty(G)$, is the
largest term of the chain $1=Q_0(G)\leq Q_1(G)=Q(G)\leq \ldots\leq
Q_i(G)\leq Q_{i+1}(G)\leq\ldots$ of normal subgroups of $G$,
where, for any $i\geq0$, $Q_{i+1}(G)/Q_i(G)=Q(G/Q_i(G))$ and
$Q_\infty(G)={\underset{i\geq0} \bigcup}Q_i(G).$

Recall that a normal subgroup $N$ of $G$ is said to be
$hypercyclically$ $embedded$ in $G$ if it contains a
$G$--invariant series whose factors are cyclic. It is easy to see
that $G$ contains a unique largest hypercyclically embedded
subgroup, which we  denote $\Sigma(G)$. More precisely,
\cite[Theorem 1]{bh} shows that $\Sigma(G)=Q_\infty(G)$ is true
for any group $G$. Some interesting connections hold between
$\mathbf{P}$--groups, $P(G)$ and $Q_\infty(G)$. For instance,
\cite[(3.1), p. 697]{br} shows that a group $G$ is a
$\mathbf{P}$--group if and only if $G/\Sigma(G)$ is a
$\mathbf{P}$--group. As a first consequence, a group $G$ is a
$\mathbf{P}$--group if and only if $G/Q_\infty(G)$ is a
$\mathbf{P}$--group. As a second consequence, $Z(G)\subseteq
Q(G)\subseteq P(G)$ is true for any group $G$. Furthermore,
$Q_\infty(G)=P(G)$ if and only if $P(G)=\Sigma(G)$.

\section{Examples}

Now we specify some of the previous notions for the symmetric group $S_3$
on 3 objects. This will help us  to visualize analogies and
differences between permutability degrees, subgroup commutativity
degrees and commutativity degrees.

\begin{ex}\label{d6}
The  smallest nonabelian group $S_3$  has
$\mathcal{L}(S_3)=\{\{1\}, S_3, A_3, H,K,L\}$, where 
$A_3=\langle (123)\rangle=\langle a \rangle$,  $H=\langle(12)\rangle=\langle h \rangle$,  $K=\langle (13)\rangle=\langle k \rangle$,  $L=\langle
(23)\rangle=\langle l \rangle$. Noting that $HK\not=KH$,
$HL\not=LH$, $KL\not=LK$, one has
$$P_{S_3}(\{1\})=P_{S_3}(A_3)=P_{S_3}(S_3)=S_3; $$
$$  \ \ P_{S_3}(H)=P_{S_3}(K)=P_{S_3}(L)=S_3 \ ; \\
P(S_3)=S_3.$$ The fact that $P_{S_3}(\{1\})=P_{S_3}(A_3)=P_{S_3}(S_3)=S_3$ is clear, since we are dealing with permutizers of
normal subgroups. On the other hand, $A_3=\langle a \rangle
\subseteq P_{S_3}(H)$ by definition. Now, $H\subseteq P_{S_3}(H)$
is obvious. Then $HA_3$, which is a subgroup of $S_3$, should be
contained in $P_{S_3}(H)$ and $|HA_3|=\frac{|H| \ |A_3|}{|H \cap
A_3|}=6$. This forces $ P_{S_3}(H)$ to be equal to $S_3.$
Similarly we get $P_{S_3}(K)=P_{S_3}(L)=S_3$.

\setlength{\unitlength}{10mm}
\begin{picture}(4,4)(-1,-1.5)
\put(1.6,2.1){$S_3$}
\put(2,2){\line(-1,-1){}}
\put(.5,.8){$A_3$}
\put(2,2){\line(0,-1){}}
\put(1,.8){\line(1,-1){}}
\put(1.6,-.5){$\{1\}$}
\put(2,.8){$H$}
\put(2,2){\line(1,-1){}}
\put(2,-.2){\line(0,1){}}
\put(2,-.2){\line(1,1){}}
\put(2,-.2){\line(2,1){2}}
\put(3,.8){$K$}
\put(2,2){\line(2,-1){2}}
\put(4,.8){$L$}
\put(5,.8){\textbf{Fig.III.1.} Hasse diagram of $\mathcal{L}(S_3)$.}
\end{picture}

It is interesting to note that an
example as easy as this has a lot of
properties in our perspective of study. The group $S_3$ is
supersovable by looking at the series $\{1\}\triangleleft A_3
\triangleleft S_3$, but is not quasihamiltonian, due to
$HK\not=KH$. At the same time, $S_3$ does not satisfy the
normalizer condition, since it is not nilpotent. Moreover, $S_3$
has $Z(S_3)=\{1\}$, $\Sigma(S_3)=Q_2(S_3)=Q_\infty(S_3)=S_3$,
$Q(S_3)=A_3$ and it is a $\mathbf{P}$--group, since
$S_3/\Sigma(S_3)=\{1\}$ is obviously a $\mathbf{P}$--group.

A direct calculation shows that
$$6 \cdot 6 \ pd(S_3)={\underset{X \in \mathcal{L}(S_3)}\sum}|P_{S_3}(X)| = |P_{S_3}(H)|+|P_{S_3}(K)|$$
$$+|P_{S_3}(L)|+|P_{S_3}(A_3)|+|P_{S_3}(S_3)|+|P_{S_3}(\{1\})|= 36,$$
then  $pd(S_3)=1 >  sd(S_3)=\frac{5}{6} >  \frac{1}{2}=d(S_3)$, agreeing with the computations in \cite[p.2510]{mt} and \cite{elr, er3}.
\end{ex}

Another easy (but interesting) example is the following.

\begin{ex}\label{d8}
The dihedral group of order 8 is
$D_8=\langle a,b \ | \ a^2=b^4=1, a^{-1}ba=b^{-1}\rangle $
and  has
$$\mathcal{L}(D_8)=\{\{1\}, \langle b \rangle, \langle b^2\rangle, \langle a \rangle, \langle ba \rangle, \langle b^2a \rangle, \langle b^3a \rangle, \{1,b^2,a,b^2a\}, \{1,b^2,ba,b^3a\}, D_8 \}.$$
The normal subgroups are $D_8$, $\{1\}$, $B=\langle b \rangle$, $Z(D_8)=\langle b^2 \rangle$, $M_1=\{1,b^2,a,b^2a\}$ and $M_2=\{1,b^2,ba,b^3a\}$. Notice that $H=\langle b^2a \rangle$ and $K=\langle a\rangle$ are contained in $M_1$, while $U=\langle ba\rangle $ and $V= \langle b^3a \rangle$ in $M_2$.

\setlength{\unitlength}{10mm}
\begin{picture}(10,4)(-.8,-1.5)
\put(1.6,2.1){$D_8$}
\put(2,2){\line(-1,-1){}}
\put(.6,.8){$M_1$}
\put(2,2){\line(0,-1){}}
\put(2,-1.5){\line(-1,1){1.3}}
\put(2,-1.5){\line(1,1){1.1}}
\put(.6,.8){\line(0,-1){.7}}
\put(3.1,.8){\line(0,-1){.7}}
\put(1,.8){\line(1,-1){}}
\put(1.6,-.5){$Z(D_8)$}
\put(2,.8){$B$}
\put(2,-1.6){\line(-2,1){2.5}}
\put(-.4,-.1){\line(1,1){1}}
\put(-.6,-.3){$H$}
\put(0.5,-.2){$K$}
\put(3,-.3){$U$}
\put(4.2,-.4){$V$}
\put(2,-1.6){\line(2,1){2.3}}
\put(4.2,-.2){\line(-1,1){1}}
\put(2,2){\line(1,-1){}}
\put(2,-.2){\line(0,1){}}
\put(2,-.2){\line(1,1){}}
\put(3,.8){$M_2$}
\put(2,-.5){\line(0,-1){}}
\put(1.8,-1.8){$\{1\}$}
\put(5,.8){\textbf{Fig.III.2.} Hasse diagram of $\mathcal{L}(D_8)$.}
\end{picture}

Moreover
$$8=|D_8|=|P_{D_8}(\{1\})|=|P_{D_8}(D_8)|=|P_{D_8}(\langle b \rangle)|=|P_{D_8}(Z(D_8))|$$
$$=|P_{D_8}(M_1)|=|P_{D_8}(M_2)|,$$     $$4=|M_1|=|P_{D_8}(H)|=|P_{D_8}(K)|,  \  \ 4=|M_2|=|P_{D_8}(V)|=|P_{D_8}(U)|$$
and $Q(D_8)=P(D_8)=Z(D_8)<\Sigma(D_8)=Q_\infty(D_8)=D_8.$ In fact, $D_8=\Sigma(D_8)$ is supersolvable and it is
also a $\mathbf{P}$--group, but nevertheless its permutability
degree is different from 1, because
$$8 \cdot 10 \ pd(D_8)={\underset{X \in \mathcal{L}(D_8)}\sum}|P_{D_8}(X)|
= 6 \cdot |D_8|+ 2 \cdot |\{1,b^2,a,b^2a\}| + 2 \cdot |\{1,b^2,ba,b^3a\}|=64.$$
More precisely,
$$d(D_8)=\frac{5}{8} < pd(D_8)=\frac{64}{80}=\frac{4}{5} < sd(D_8)=\frac{46}{55}.$$
The value of $d(D_8)$ can be found in \cite{elr} and that of
$sd(D_8)$ in \cite{mt}. This example shows that there exist
$\mathbf{P}$--groups with permutability degree different from 1.
Note  that $D_8$ satisfies $8=|D_8|<|\mathcal{L}(D_8)|=10$ but
$$8=|\{\{1\}, D_8, \langle b \rangle, \{1,b^2,ba,b^3a\},
\{1,b^2,a,b^2a\}, \langle b^2a\rangle, \langle b^2 \rangle,
\langle a \rangle\}|=|\mathcal{C}_{\mathcal{L}(D_8)}(\langle a
\rangle)|$$ $$\not \le  |\mathcal{Z}_{\mathcal{L}(D_8)}(\langle a \rangle)|=|\{\{1\},\langle b^2a\rangle, \langle b^2 \rangle, \langle a \rangle \}|=4.$$
\end{ex}

Examples \ref{d6} and \ref{d8} illustrate a series of problems for
the computation of the permutability degree, arising from the
nature of the subgroup lattice of the groups under consideration. 
We will come back to this point later on.

\section{General properties of the permutability degree}

We note that \cite[Theorems 2.5, 3.3]{elr} shows that the
commutativity degree is monotone. This is a well--known property,
which is due to the fact that we are dealing with a positive
monotone measure of probability. Similar situations can be found
for $sd(G)$ in  \cite[Proposition 2.4, Corollaries 2.5, 2.6, 2.7,
Theorems 3.1.1, 3.1.5]{mt} and in \cite{er3, gr, otera}.
For $pd(G)$ we have something similar.

\begin{thm}\label{t:monotone}  Let  $H$ be a subgroup of a group $G$.
Then
$$\frac{|\mathcal{L}(H)|}{|\mathcal{L}(G)| \ |G:H|}  \ pd(H) \le  \ pd(G).$$
Moreover, if $|P_G(X): P_H(X)|\leq |G:H|$ for all $X \in \mathcal{L}(G)$, $P(G) \le H$ and $|\mathcal{L}(G)-\mathcal{L}(P(G))| \le |\mathcal{L}(P(G))|$, then $|\mathcal{L}(G)| \ pd(G) \le 2 |\mathcal{L}(H)| \ pd(H)$. In particular,
$$  \frac{|\mathcal{L}(H)|}{|\mathcal{L}(G)| \ |G:H|}  \ pd(H) \le   \ pd(G)  \le \frac{2|\mathcal{L}(H)|}{|\mathcal{L}(G)|} pd(H).$$
\end{thm}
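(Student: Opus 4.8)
The plan is to strip the normalising constants and reduce both inequalities to comparisons between the unnormalised sums $\sum_X|P_G(X)|$ and $\sum_X|P_H(X)|$. Writing out the definition gives $|G|\,|\mathcal{L}(G)|\,pd(G)=\sum_{X\in\mathcal{L}(G)}|P_G(X)|$ and similarly for $H$, and since $|G:H|\,|H|=|G|$ the first inequality is equivalent to
$$\sum_{X\in\mathcal{L}(H)}|P_H(X)|\le \sum_{X\in\mathcal{L}(G)}|P_G(X)|.$$
To obtain this I would first record the elementary but crucial monotonicity of permutizers: because $\mathcal{L}(H)\subseteq\mathcal{L}(G)$, and because the relation $\langle g\rangle X=X\langle g\rangle$ is a statement about subsets of $G$ that does not depend on the ambient group, every cyclic subgroup $\langle g\rangle$ with $g\in H$ permuting with $X$ also contributes to $P_G(X)$; hence $P_H(X)\le P_G(X)$ and $|P_H(X)|\le|P_G(X)|$ for each $X\in\mathcal{L}(H)$. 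The inequality then follows termwise over $\mathcal{L}(H)$, the remaining terms indexed by $\mathcal{L}(G)-\mathcal{L}(H)$ being nonnegative.

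For the upper bound I would split $\mathcal{L}(G)$ as in (\ref{e:1}) into $\mathcal{L}(P(G))$ and its complement. On $\mathcal{L}(P(G))$ I use the fact recorded before the theorem that $P_G(X)=G$, so that part contributes exactly $|\mathcal{L}(P(G))|\,|G|$; on the complement I bound each $|P_G(X)|$ by $|G|$ and invoke the hypothesis $|\mathcal{L}(G)-\mathcal{L}(P(G))|\le|\mathcal{L}(P(G))|$. This yields
$$\sum_{X\in\mathcal{L}(G)}|P_G(X)|\le 2\,|\mathcal{L}(P(G))|\,|G|,\qquad\text{that is}\qquad |\mathcal{L}(G)|\,pd(G)\le 2\,|\mathcal{L}(P(G))|.$$
It then remains to absorb $|\mathcal{L}(P(G))|$ into $|\mathcal{L}(H)|\,pd(H)$, and here the other two hypotheses enter. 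For $X\in\mathcal{L}(P(G))\subseteq\mathcal{L}(H)$ (using $P(G)\le H$) one has $P_G(X)=G$, so $|P_G(X):P_H(X)|\le|G:H|$ forces $|P_H(X)|\ge|H|$; since $P_H(X)\le H$ this gives $P_H(X)=H$. Summing over $\mathcal{L}(P(G))$ inside $\mathcal{L}(H)$ yields $\sum_{X\in\mathcal{L}(H)}|P_H(X)|\ge|\mathcal{L}(P(G))|\,|H|$, i.e. $|\mathcal{L}(P(G))|\le|\mathcal{L}(H)|\,pd(H)$, and combining with the previous display gives $|\mathcal{L}(G)|\,pd(G)\le 2|\mathcal{L}(H)|\,pd(H)$. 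The two bounds placed side by side are precisely the final ``in particular'' chain.

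The step I expect to demand the most care is the meaning and monotonicity of $P_H(X)$: the index hypothesis is stated for all $X\in\mathcal{L}(G)$, so one must read $P_H(X)$ as $\langle g\in H : \langle g\rangle X=X\langle g\rangle\rangle$ even when $X\not\le H$, and verify that it is a subgroup of $P_G(X)$ so that $|P_G(X):P_H(X)|$ is well defined. Fortunately the proof only ever applies this hypothesis for $X\in\mathcal{L}(P(G))$, where $X\le P(G)\le H$, so the delicate extended definition is never genuinely exploited; the only real input is the clean deduction ``$P_G(X)=G$ together with bounded index forces $P_H(X)=H$''. I would finally double-check the normalisation identity $|G:H|\,|H|=|G|$ and the direction of each index inequality, since an inverted index is the easiest place to slip.
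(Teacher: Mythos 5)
Your proof is correct. For the lower bound it is essentially the paper's argument: both rest on $P_H(X)\le P_G(X)$ together with enlarging the index set from $\mathcal{L}(H)$ to $\mathcal{L}(G)$ (the published proof writes an equality where a ``$\ge$'' is meant at that point, as the errata records; you get the direction right). For the upper bound your route is genuinely, and usefully, different. The paper applies the index hypothesis $|P_G(X):P_H(X)|\le |G:H|$ to \emph{every} $X\in\mathcal{L}(G)$, reduces everything to sums of $|P_H(X)|$, and then replaces $\sum_{X\in\mathcal{L}(G)-\mathcal{L}(P(G))}|P_H(X)|$ by $\sum_{X\in\mathcal{L}(P(G))}|P_H(X)|$ on the sole grounds that the first sum has no more summands than the second; as written this step is only valid because the summands over $\mathcal{L}(P(G))$ are maximal, i.e.\ $P_H(X)=H$ there, a fact the paper never states. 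You instead bound the complement crudely by $|G|$ per term to get the clean intermediate inequality $|\mathcal{L}(G)|\,pd(G)\le 2\,|\mathcal{L}(P(G))|$, and then invoke the index hypothesis only on $\mathcal{L}(P(G))$ (where $P(G)\le H$ makes $P_H(X)$ unambiguous) to deduce exactly $P_H(X)=H$ and hence $|\mathcal{L}(P(G))|\le |\mathcal{L}(H)|\,pd(H)$. So your version isolates the one place each hypothesis is really needed and supplies the justification that the paper's count-of-terms step tacitly assumes. One shared caveat: both arguments lean on the introduction's claim that $P_G(X)=G$ for every $X\in\mathcal{L}(P(G))$ (you use it explicitly, the paper implicitly), and the authors' errata later retracts that claim; this is a weakness inherited from the paper's setup rather than a flaw specific to your write-up, but it deserves a flag.
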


\begin{proof} We start by proving the first inequality.
Since $|P_H(X)|\le |P_G(X)|$ for all $X \in \mathcal{L}(G)$, we
have $$|G| \ |\mathcal{L}(G)|  \ pd(G) = \sum_{X \in
\mathcal{L}(G)}|P_G(X)|$$
$$\ge \sum_{X \in
\mathcal{L}(G)}|P_H(X)|=\sum_{X \in \mathcal{L}(H)}|P_H(X)|=
pd(H) \ |H| \ |\mathcal{L}(H)|$$ and the result follows.

Now we prove the second inequality. Since by hypothesis,
$|P_G(X): P_H(X)|\leq |G:H|$,  we have $|P_G(X)|\leq
|G:H||P_H(X)|$ and so
\[|G| \ |\mathcal{L}(G)|  \ pd(G)={\underset{X \in \mathcal{L} (G)}\sum}|P_G(X)|
\leq |G:H|{\underset{X \in \mathcal{L}(G)}\sum}|P_H(X)|\] but,
from \eqref{e:1}, this means
\[=|G:H| \ \left({\underset{X \in
\mathcal{L}(P(G))}\sum}|P_H(X)| + {\underset{X \in \mathcal{L}(G)-
\mathcal{L}(P(G))}\sum}|P_H(X)| \right)\] and the inequality $|\mathcal{L}(G)-\mathcal{L}(P(G))| \le
|\mathcal{L}(P(G))|$ provided by hypothesis, implies
\[\le|G:H| \ \left({\underset{X \in
\mathcal{L}(P(G))}\sum}|P_H(X)| + {\underset{X \in
\mathcal{L}(P(G))}\sum}|P_H(X)| \right) = 2 |G:H| {\underset{X \in
\mathcal{L}(P(G))}\sum}|P_H(X)|\]
\[\le 2|G:H|{\underset{X\in
\mathcal{L}(H)}\sum}|P_H(X)|= 2 \ |G:H| \ |H| \ |\mathcal{L}(H)| \ pd(H)\]
from which we have
$|\mathcal{L}(G)| \ pd(G) \le 2 |\mathcal{L}(H)| \ pd(H).$
\end{proof}

A classic splitting result for the product probability of two
independent events is described by the following corollary. The
proof  may be  generalized to finitely many factors, whose
orders are pairwise coprime.

\begin{prop}\label{c:1} Let $G$ and $H$ be two groups such that $\gcd(|G|,|H|)=1$. Then
$pd(G \times H)= pd(G) \ pd(H)$.
\end{prop}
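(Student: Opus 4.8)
The plan is to reduce the product formula $pd(G\times H)=pd(G)\,pd(H)$ to three multiplicative facts about the factors $G$ and $H$ under the coprimality hypothesis $\gcd(|G|,|H|)=1$. Writing out the definition, we have
\[
pd(G\times H)=\frac{1}{|G\times H|\;|\mathcal{L}(G\times H)|}\sum_{Y\in\mathcal{L}(G\times H)}|P_{G\times H}(Y)|,
\]
so the goal is to match each of the three ingredients $|G\times H|$, $|\mathcal{L}(G\times H)|$ and the sum of permutizer orders against the corresponding product over $G$ and $H$ separately. The first ingredient is immediate, since $|G\times H|=|G|\,|H|$.

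The key structural input is that, because $\gcd(|G|,|H|)=1$, every subgroup $Y$ of $G\times H$ factors uniquely as a direct product $Y=A\times B$ with $A\in\mathcal{L}(G)$ and $B\in\mathcal{L}(H)$. This is the coprime case of Goursat's lemma: any subgroup of a direct product corresponds to a pair of sections with an isomorphism between them, but coprimality forces that common section to be trivial, so the only surviving subgroups are the ``box'' subgroups $A\times B$. First I would invoke this to establish the lattice factorization $\mathcal{L}(G\times H)\cong\mathcal{L}(G)\times\mathcal{L}(H)$, which gives $|\mathcal{L}(G\times H)|=|\mathcal{L}(G)|\,|\mathcal{L}(H)|$ and, just as importantly, lets me index the defining sum over pairs $(A,B)$.

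The main obstacle — and the heart of the proof — is showing that the permutizer itself splits as $P_{G\times H}(A\times B)=P_G(A)\times P_H(B)$. For the containment $\supseteq$ one checks that if $\langle g\rangle A=A\langle g\rangle$ in $G$ and $\langle h\rangle B=B\langle h\rangle$ in $H$, then the cyclic subgroup generated by $(g,h)$ permutes with $A\times B$; here I would use that a generator $(g,h)$ of a cyclic subgroup of the coprime product decomposes so that $\langle(g,h)\rangle=\langle g\rangle\times\langle h\rangle$, again a consequence of $\gcd(|G|,|H|)=1$. For the reverse containment I would take an arbitrary cyclic subgroup $\langle(g,h)\rangle=\langle g\rangle\times\langle h\rangle$ permuting with $A\times B$ and observe that the product $(A\times B)(\langle g\rangle\times\langle h\rangle)$ equals $A\langle g\rangle\times B\langle h\rangle$, so permutability in the product is equivalent to permutability in each coordinate; this pins each generating cyclic subgroup inside $P_G(A)\times P_H(B)$. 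Consequently $|P_{G\times H}(A\times B)|=|P_G(A)|\,|P_H(B)|$.

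Granting the permutizer splitting, the final step is a routine rearrangement: the sum over $\mathcal{L}(G\times H)$ becomes a double sum over $(A,B)$, which factors as
\[
\sum_{A\in\mathcal{L}(G)}\sum_{B\in\mathcal{L}(H)}|P_G(A)|\,|P_H(B)|=\left(\sum_{A\in\mathcal{L}(G)}|P_G(A)|\right)\left(\sum_{B\in\mathcal{L}(H)}|P_H(B)|\right).
\]
Dividing by $|G|\,|H|\,|\mathcal{L}(G)|\,|\mathcal{L}(H)|$ and grouping the $G$-factors and $H$-factors then yields exactly $pd(G)\,pd(H)$. The remark about generalizing to finitely many pairwise-coprime factors follows by an obvious induction, since a finite product of pairwise-coprime groups can be regrouped as $G_1\times(G_2\times\cdots\times G_n)$ with the two factors still of coprime order.
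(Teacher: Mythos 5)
Your proof is correct and follows essentially the same route as the paper's: both rest on the coprime splitting of the subgroup lattice, $\mathcal{L}(G\times H)\cong\mathcal{L}(G)\times\mathcal{L}(H)$, and of the permutizer, $P_{G\times H}(A\times B)=P_G(A)\times P_H(B)$, followed by factoring the defining sum. The only difference is that you supply the details (Goursat's lemma in the coprime case, the decomposition $\langle(g,h)\rangle=\langle g\rangle\times\langle h\rangle$) that the paper asserts as ``easy to see'' by analogy with normalizers, so your write-up is if anything more complete than the published one.
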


\begin{proof} Given three groups $A$, $B$ and $C$ such that
$A \times B \subseteq C$,  we
know that $N_C(A \times B)=N_C(A) \times N_C(B)$. This holds
similarly for the permutizers and it is easy to see that
$P_C(A \times B)=P_C(A) \times P_C(B).$  Now this fact and the assumption 
$\gcd(|G|,|H|)=1$ allow us to conclude that
$$ \frac{1}{|G \times H| \ \ |\mathcal{L}(G \times H)|}{\underset{X \times Y \in \mathcal{L}
(G\times H)}\sum}|P_{G\times H}(X \times Y)|$$
$$= \frac{1}{|G| \ |\mathcal{L}(G)|} \ \frac{1}{|H| \
|\mathcal{L}(H)|} \ {\underset{X \in \mathcal{L} (G)}\sum}|P_G(X)|
 \ {\underset{Y \in \mathcal{L} (H)}\sum}|P_H(Y)|.
 $$ \end{proof}

The underlying problem we deal with is the order of the subgroup
lattices, which is hard to predict in general. If
we concentrate on some groups arising from finite geometries, then
the situation is more clear (dihedral groups, semidihedral groups
and generalized quaternion groups were studied in \cite{alghamdi,
elr, far, otera,  mt, mtbis, mttris} from a similar perspective).
Having in mind Examples \ref{d6} and \ref{d8}, we observe from
\cite[pp. 26--29]{rs} and \cite{otera, mt} that the dihedral group
\begin{equation}\label{dihedral}D_{2n}=\langle x,y \ | \ x^2=y^n=1, x^{-1}yx=y^{-1}\rangle= C_2 \ltimes C_n =\langle x \rangle \ltimes \langle y\rangle
\end{equation}
of symmetries of a regular polygon with $n \ge 1$ edges has order
$2n$ and splits in the semidirect product of a cyclic group
$\langle y \rangle \simeq C_n$ of order $n$ by a cyclic group
$\langle x\rangle \simeq C_2$ of order 2 acting by inversion on
$C_n$. In particular,   $S_3 \simeq D_6$ for $n=3$ and one can
note that the Hasse diagram of $\mathcal{L}(D_6)=\mathcal{L}(S_3)$
forms a \textit{diamond} in which there are only 4 \textit{atomic
elements} (see \cite{rs} for this terminology) in between $\{1\}$
and $D_6$ and their number can be easily computed. The following
Fig.III.3 summarizes  the information  of Example \ref{d6} in a
more general situation.

\setlength{\unitlength}{10mm}
\begin{picture}(10,5)(-2.7,-2.5)
\put(1.6,2.1){$D_{2p}$}
\put(2,2){\line(-1,-1){}}
\put(.5,.8){$C_p$}
\put(2,2){\line(0,-1){}}
\put(1,.8){\line(1,-1){}}
\put(1.6,-.5){$\{1\}$}
\put(2,.8){$H_1$}
\put(2,2){\line(1,-1){}}
\put(2,-.2){\line(0,1){}}
\put(2,-.2){\line(1,1){}}
\put(2,-.2){\line(2,1){2}}
\put(3,.8){$H_2$}
\put(6,.8){$H_p$}
\put(2,2){\line(2,-1){2}}
\put(2,-.2){\line(4,1){4}}
\put(2,2){\line(4,-1){4}}
\put(4.2,.8){...}
\put(-1,-1){\textbf{Fig.III.3.} Hasse diagram of $\mathcal{L}(D_{2p})$ with odd prime $p \ge 3$;}
\put(0.5,-1.5){$H_1 \simeq H_2 \simeq \ldots \simeq H_p \simeq C_2$.}
\end{picture}

From Figs.III.1, III.2 and III.3, it is clear that
$\mathcal{L}(D_{2p})$ has $p+1$ proper subgroups, and this fact
comes out from the following formula
\begin{equation}\label{lattice}|\mathcal{L}(D_{2n})|=\sigma(n)+\tau(n),\end{equation}
 where $\sigma(n)$ and $\tau(n)$ are the sum and the number of all divisors of $n$ (here $n$ is arbitrary, not necessarily an odd prime), respectively.

In particular, if $n=p^m$ is a power of a prime $p$ (possibly
$p=2$) for some $m\ge0$, then the set  of all divisors of $p^m$ is
$\mathrm{Div}(p^m)=\{1,p,p^2,\ldots,p^m\}$ so that
\begin{equation}
\label{st}\sigma(p^m)=\sum^m_{j=0}p^j=\frac{1-p^{m+1}}{1-p} \ \
\mathrm{and} \ \ \tau(p^m)=|\mathrm{Div}(p^m)|=m+1.
\end{equation}

The reader has probably noted that we used the formula for the sum
of a geometric series in the previous expression for
$\sigma(p^m)$. Then we may conclude that
  \begin{equation}\label{e:2}|\mathcal{L}(D_{2p^m})|=1+m+\frac{1-p^{m+1}}{1-p}=m+\frac{p^{m+1}+p-2}{p-1}.
\end{equation}

The next result shows an upper bound for $pd(G)$, when $|\mathcal{L}(G)|$ is of type \eqref{e:2}.

\begin{thm}\label{l:2} Let $G$ be a noncyclic group and $p$  the smallest prime divisor of $|G|$. If  $|P(G)|=p$ and $|\mathcal{L}(G)|=m+\frac{p^{m+1}+p-2}{p-1}$ for some $m\ge0$, then
$$pd(G)\leq \frac{p^{m+1}+2p^2+(m-3)p-m}{p^{m+2}+(m+1)p^2-(m+2)p}.$$
\end{thm}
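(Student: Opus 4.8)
The plan is to work from the rewritten expression \eqref{e:1} and to separate the contribution of the subgroups lying inside $P(G)$ from the rest. Set $\ell=|\mathcal{L}(G)|$. Since $|P(G)|=p$ is prime, $P(G)$ is cyclic of order $p$, so $\mathcal{L}(P(G))=\{\{1\},P(G)\}$ has exactly two elements, and for each of them the observation that $P_G(X)=G$ for every $X\in\mathcal{L}(P(G))$ shows that the first sum in \eqref{e:1} equals $2|G|$. The only arithmetic input I will need from the hypothesis on $|\mathcal{L}(G)|$ is the identity obtained by clearing denominators,
$$(p-1)\,\ell = p^{m+1}+(m+1)p-(m+2),$$
which also yields $p(p-1)\ell=p^{m+2}+(m+1)p^2-(m+2)p$, i.e. exactly the denominator appearing in the statement.

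The heart of the argument is to bound the second sum in \eqref{e:1}, namely $\sum_{X\in\mathcal{L}(G)-\mathcal{L}(P(G))}|P_G(X)|$, which runs over $\ell-2$ subgroups. I claim that each such term satisfies $|P_G(X)|\le |G|/p$. Granting this, the second sum is at most $(\ell-2)|G|/p$, and \eqref{e:1} gives
$$pd(G)\le\frac{1}{|G|\,\ell}\left(2|G|+\frac{\ell-2}{p}\,|G|\right)=\frac{2p+\ell-2}{p\,\ell}.$$
Multiplying numerator and denominator by $(p-1)$ and substituting the identity above turns the right-hand side into $\frac{p^{m+1}+2p^2+(m-3)p-m}{p^{m+2}+(m+1)p^2-(m+2)p}$, which is the asserted bound; this final rewriting is routine and not the difficulty.

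To justify the claim $|P_G(X)|\le|G|/p$, I would argue that for $X\notin\mathcal{L}(P(G))$ the permutizer $P_G(X)$ is a \emph{proper} subgroup of $G$. Once $P_G(X)\neq G$, the index $|G:P_G(X)|$ is a divisor of $|G|$ bigger than $1$, hence at least the smallest prime divisor $p$, so $|P_G(X)|\le|G|/p$ follows at once (and this is the only place where the minimality of $p$ is used). Thus everything reduces to the implication $P_G(X)=G\Rightarrow X\in\mathcal{L}(P(G))$, i.e. to showing that the only subgroups with full permutizer are those contained in $P(G)$.

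This last implication is exactly the step I expect to be the main obstacle, and it is genuinely delicate, since it is \emph{not} automatic for every $G$ meeting the hypotheses. Indeed, in $D_8$ (Example \ref{d8}) one has $P(D_8)=Z(D_8)$ of order $2=p$ and $|\mathcal{L}(D_8)|=10$ of the required form with $p=2,\ m=2$, yet $B=\langle b\rangle$, $M_1$ and $M_2$ all have permutizer $D_8$ while lying outside $\mathcal{L}(P(D_8))$. Consequently the naive count of subgroups with full permutizer as $2$ is what the bound silently assumes, and it must be secured by genuine structural input rather than taken for granted. I would therefore concentrate the effort on pinning down exactly how many $X$ satisfy $P_G(X)=G$, using the explicit description of normal subgroups and of the permutizer pattern available for the dihedral groups $D_{2p^m}$ that are the source of the lattice-size formula \eqref{e:2}, where this count can be controlled directly.
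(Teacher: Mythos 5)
Your proposal follows exactly the paper's route: split the sum via \eqref{e:1}, use $|\mathcal{L}(P(G))|=2$ to bound the first block by $2|G|$, bound each of the remaining $|\mathcal{L}(G)|-2$ permutizers by $|G|/p$ via minimality of $p$, and then verify the algebraic identity $(p-1)|\mathcal{L}(G)|=p^{m+1}+(m+1)p-(m+2)$ to land on the stated fraction. Your arithmetic at the end is correct. The one step you leave open --- that $P_G(X)$ is a \emph{proper} subgroup of $G$ for every $X\notin\mathcal{L}(P(G))$ --- is precisely where the paper's argument is defective: the printed proof disposes of it with the inference ``if $P_G(X)=G$ for some $X$, then $|G|=|P_G(X)|=|P(G)|$ and $G$ would be cyclic,'' which confuses a single permutizer with the intersection $P(G)=\bigcap_{H}P_G(H)$ and is simply not valid. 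So you have correctly located a genuine gap, and it is the paper's gap, not one you introduced. The authors' own errata (Section 8, item (4)) concede the point and reformulate the theorem with the additional hypothesis that $P_G(X)$ is a proper subgroup of $G$ for all $X\in\mathcal{L}(G)$; with that hypothesis added, your argument (and theirs) closes completely, and there is nothing further to prove about ``how many $X$ have full permutizer.''

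Two small cautions. First, your witness for the difficulty uses the $D_8$ data of Example \ref{d8}, but those computations are themselves erroneous (errata item (2)): in fact all permutizers in $D_8$ equal $D_8$, so $P(D_8)=D_8$ and $D_8$ does not satisfy the hypothesis $|P(G)|=p$; it is therefore not a counterexample to the theorem as stated, though your structural objection --- that nothing in the hypotheses forces $P_G(X)\neq G$ off $\mathcal{L}(P(G))$ --- stands on its own. Second, your evaluation of the first block as exactly $2|G|$ rests on ``$P_G(X)=G$ for all $X\in\mathcal{L}(P(G))$,'' a claim the errata also retracts; this is harmless here because you only need the upper bound $|P_G(X)|\le|G|$, but you should phrase it as an inequality.
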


\begin{proof}If $P_G(X)=G$ for some $X \in \mathcal{L}(G)$, then
$|G|=|P_G(X)|=|P(G)|$ and $G$ would be cyclic, contradicting our
assumption. Without loss of generality we may assume $P_G(X) \neq
G$ for all $X \in \mathcal{L}(G)$. The minimality of $p$ implies
that $|P_G(X)|\leq \frac{|G|}{p}$ for all $X\in \mathcal{L}(G)$.
Of course, $|\mathcal{L}(P(G))|=2$ and \eqref{e:1} becomes
$$ |G| \cdot \left(\frac{mp-m +p^{m+1}+p-2}{p-1}\right)  \cdot pd(G)
=2|G|+{\underset{X\in \mathcal{L}(G)-\mathcal{L}(P(G))}\sum}|P_G (X)|$$
$$\leq 2|G|+ \left( |\mathcal{L}(G)-\mathcal{L}(P(G))|\right) \frac{|G|}{p}$$$$ =
2|G| + \frac{|G|}{p} (|\mathcal{L}(G)|-2)= 2|G| + \frac{|G|}{p} \left(\frac{mp-m +p^{m+1}+p-2}{p-1}-2\right)$$
$$=\frac{|G|}{p} \left(\frac{mp-m +p^{m+1}+p-2-2p+2+2p^2-2p}{p-1}\right)$$
$$=\frac{|G|}{p} \left(\frac{p^{m+1}+2p^2+(m-3)p-m}{p-1}\right).$$
This gives, as claimed. 
\end{proof}

\section{Some theorems of structure}

The present section is devoted to prove restrictions on $P(G)$, arising from exact bounds for $pd(G)$, when $G$ is an arbitrary group. The evidences of Examples \ref{d6} and \ref{d8} motivated most of the following results.

\begin{thm}\label{t:2}
Let $G$ be  a group (with $pd(G)\not=1$)
and $p$  the smallest prime divisor of $|G|$. Then $$\left(1-\frac{p}{|G|}\right) \frac{|\mathcal{L}(P(G))|}{|\mathcal{L}(G)|}+\frac{p}{|G|} \le pd(G).$$
Moreover, if $P_G(X)$ is a proper subgroup of $G$ for all $X \in
\mathcal{L}(G)-\mathcal{L}(P(G))$, then 
$$ pd(G) \leq \frac{1}{p}+\frac{(p-1)|\mathcal{L}(P(G))|}{p \ |\mathcal{L}(G)|}.$$
\end{thm}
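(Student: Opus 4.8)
The plan is to start from the decomposition \eqref{e:1} of $pd(G)$ and to exploit the fact, recorded in the introduction, that $P_G(X)=G$ (hence $|P_G(X)|=|G|$) for every $X\in\mathcal{L}(P(G))$. This collapses the first sum in \eqref{e:1} to $|\mathcal{L}(P(G))|\,|G|$, so that
$$pd(G)=\frac{|\mathcal{L}(P(G))|}{|\mathcal{L}(G)|}+\frac{1}{|G|\,|\mathcal{L}(G)|}\sum_{X\in\mathcal{L}(G)-\mathcal{L}(P(G))}|P_G(X)|.$$
Both inequalities then reduce to estimating the residual sum over $\mathcal{L}(G)-\mathcal{L}(P(G))$ term by term, and the two arguments run in parallel.

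For the lower bound I would first observe that $\{1\}\in\mathcal{L}(P(G))$, so every $X\in\mathcal{L}(G)-\mathcal{L}(P(G))$ is nontrivial; by Lagrange its order divides $|G|$ and exceeds $1$, hence it is at least $p$. Since $X\subseteq P_G(X)$, this yields $|P_G(X)|\ge|X|\ge p$ for each such $X$. Summing over the $|\mathcal{L}(G)|-|\mathcal{L}(P(G))|$ residual terms and inserting the result into the displayed identity gives
$$pd(G)\ge\frac{|\mathcal{L}(P(G))|}{|\mathcal{L}(G)|}+\frac{p\bigl(|\mathcal{L}(G)|-|\mathcal{L}(P(G))|\bigr)}{|G|\,|\mathcal{L}(G)|},$$
which is exactly the asserted lower bound after collecting the coefficient of $\frac{|\mathcal{L}(P(G))|}{|\mathcal{L}(G)|}$.

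For the upper bound I would invoke the extra hypothesis that $P_G(X)$ is proper for all $X\in\mathcal{L}(G)-\mathcal{L}(P(G))$. A proper subgroup has index $|G:P_G(X)|>1$ dividing $|G|$, so its index is at least the smallest prime divisor $p$ of $|G|$, whence $|P_G(X)|\le|G|/p$. Substituting this upper estimate for each residual term into the same identity and simplifying produces
$$pd(G)\le\frac{1}{p}+\frac{(p-1)\,|\mathcal{L}(P(G))|}{p\,|\mathcal{L}(G)|}.$$

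The only place needing genuine care is the pair of per-term estimates: for the lower bound one must justify that the excluded subgroups are nontrivial, which is precisely where $\{1\}\in\mathcal{L}(P(G))$ enters, and for the upper bound one needs the elementary fact that a proper subgroup has index at least $p$. The standing hypothesis $pd(G)\neq1$ guarantees $P(G)\neq G$ (indeed $pd(G)=1$ forces every $P_G(X)=G$ and hence $P(G)=G$), so that $\mathcal{L}(G)-\mathcal{L}(P(G))\neq\emptyset$ and the estimates are not vacuous. Beyond these two observations I expect no deeper obstacle, since both bounds fall out of the single decomposition above once the correct one-sided bound on each $|P_G(X)|$ is in hand.
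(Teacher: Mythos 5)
Your argument is correct and follows essentially the same route as the paper's proof: both start from the decomposition \eqref{e:1}, collapse the sum over $\mathcal{L}(P(G))$ using the introduction's claim that $P_G(X)=G$ for $X\in\mathcal{L}(P(G))$, and then bound each residual term below by $p$ (respectively above by $|G|/p$ via the properness hypothesis). The only difference is that you explicitly justify $|P_G(X)|\ge p$ through $\{1\}\in\mathcal{L}(P(G))$ and Lagrange, a step the paper leaves implicit; your reliance on $P_G(X)=G$ for every $X\in\mathcal{L}(P(G))$ is exactly the paper's own (and is the introductory claim later questioned in the errata), so it is not a divergence from the published argument.
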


\begin{proof}
In order to prove the lower bound, it is enough to note 
from \eqref{e:1} that
\begin{equation}\label{e:3}|\mathcal{L}(G)||G| pd(G)= |\mathcal{L}(P(G))||G| + {\underset{X \in \mathcal{L}(G)
- \mathcal{L} (P(G))}\sum}|P_G(X)|\end{equation}
$$  \geq |\mathcal{L}(P(G))||G| +
\Big(|\mathcal{L}(G)|-|\mathcal{L}(P(G))|\Big)p, $$
where we have used in the last step that  $| P_G(X)| \geq p$. Then we continue
\[= (|G|-p)|\mathcal{L}(P(G))|+p|\mathcal{L}(G)|\] from which we get
\[ pd(G) \geq \frac{(|G|-p)|\mathcal{L}(P(G))|+p|\mathcal{L}(G)|}{|\mathcal{L}(G)||G|}= \frac{(|G|-p)|\mathcal{L}(P(G))|}{|\mathcal{L}(G)||G|}+\frac{p}{|G|}.\]

Now we prove the upper bound.  Formula \eqref{e:1} becomes again \eqref{e:3}
\[ |G| |\mathcal{L}(G)| \ pd(G) = |\mathcal{L}(P(G))||G| + {\underset{X \in \mathcal{L}(G)
- \mathcal{L} (P(G))}\sum}|P_G(X)|\]
once one uses the fact that $P_G(X)=G$ for every $X \in
\mathcal{L} (P(G))$. Now, since $|G:P_G(X)|\not=1$ for every $X
\in \mathcal{L}(G) - \mathcal{L} (P(G))$, we get $|G:P_G(X)|\geq
p$, that is, $|P_G(X)|\leq \frac{|G|}{p}$. Therefore \eqref{e:3}
is upper bounded by
\[ \leq |\mathcal{L}(P(G))||G| +
\Big(|\mathcal{L}(G)| - |\mathcal{L}(P(G))|\Big) \frac{|G|}{p} =
\frac{(p-1)|\mathcal{L}(P(G))||G| }{p}+
\frac{|\mathcal{L}(G)||G|}{p}\] and the result follows.
\end{proof}

Of course, $D_8$ satisfies the lower bound, but not the upper
bound, of Theorem \ref{t:2}. Details can be deduced from the
information of Example \ref{d8}. This is to justify that Theorem
\ref{t:2} originates from evidences of computational nature. On
the other hand, Example \ref{d8}  shows also that $Z(D_8)=P(D_8)
\simeq C_2$. Then, when can we say that $P(D_8)$ is noncyclic?
The next two results concern this question.

\begin{thm}\label{t:1}
If $P(G)$ is  a nontrivial proper subgroup of a group $G$ and $pd(G)=\frac{1}{2}+\frac{|\mathcal{L}(P(G))|}{2
\ |\mathcal{L}(G)|}$, then $P(G)$ is noncyclic.
\end{thm}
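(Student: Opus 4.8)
The plan is to treat the prescribed value $pd(G)=\frac12+\frac{|\mathcal{L}(P(G))|}{2|\mathcal{L}(G)|}$ as the equality case of the estimates of Theorem \ref{t:2}, to read off the resulting rigidity of the permutizers, and finally to contradict the cyclicity of $P(G)$.

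First I would clear the degenerate cases and fix notation. Since $P(G)$ is \emph{proper}, the sublattice $\mathcal{L}(P(G))$ misses $G$, so $|\mathcal{L}(P(G))|<|\mathcal{L}(G)|$ and the prescribed value is strictly below $1$; thus $pd(G)\neq1$ and Theorem \ref{t:2} is available. Substituting the value into the identity \eqref{e:3} and cancelling yields $\sum_{X\in\mathcal{L}(G)-\mathcal{L}(P(G))}|P_G(X)|=\frac{|G|}{2}\,|\mathcal{L}(G)-\mathcal{L}(P(G))|$, i.e. the permutizers of the subgroups \emph{not} lying in $P(G)$ have mean order exactly $|G|/2$. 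I would record two facts for later use: $P(G)=\bigcap_H P_G(H)$ is characteristic, since $\phi(P_G(H))=P_G(\phi(H))$ for every automorphism $\phi$, hence $P(G)\trianglelefteq G$; and every normal subgroup $N\trianglelefteq G$ has $P_G(N)=G$, because $N$ permutes with every cyclic subgroup. In particular $G$ itself and all normal subgroups contribute the maximal value $|G|$ to the sum above.

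The strategy is then the contrapositive: assuming $P(G)$ cyclic, I aim to show the mean cannot equal $|G|/2$. Writing $r=|\mathcal{L}(P(G))|/|\mathcal{L}(G)|\in(0,1)$, the lower bound, the upper bound and the prescribed value of Theorem \ref{t:2} all have the shape $r+\lambda(1-r)$ with $\lambda=\frac{p}{|G|},\ \frac1p,\ \frac12$ respectively, $p$ being the smallest prime divisor of $|G|$. As $1-r>0$, the value automatically clears the lower bound (it merely reproduces $|G|\ge 2p$) and coincides with the upper-bound expression precisely when $p=2$. The putative ``equality regime'', in which every $X\notin\mathcal{L}(P(G))$ has $P_G(X)$ of index exactly $p$, can be ruled out outright: picking any $X_0\notin\mathcal{L}(P(G))$, its permutizer $M_0=P_G(X_0)$ has index $p$ and is therefore normal (index equal to the least prime), so $P_G(M_0)=G$; if $M_0\notin\mathcal{L}(P(G))$ this contradicts $|P_G(M_0)|=|G|/p$, while if $M_0\le P(G)$ then $|G|/p=|M_0|\le|P(G)|\le|G|/p$ forces $P(G)=M_0$ of index $p$ and hence $X_0\le P(G)$, a contradiction. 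Consequently the value $\frac12+\frac{r}{2}$ can only be realised through a genuine \emph{mixture} of full permutizers (arising at least from $G$ and the normal subgroups) and of small ones.

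The main obstacle is exactly this mixture: I must show that when $P(G)$ is cyclic the large and small permutizers cannot balance to the mean $|G|/2$. Here I would exploit that $P(G)$ is characteristic and cyclic to bound from above the subgroups $X\notin\mathcal{L}(P(G))$ with $P_G(X)=G$ — using $Z(G)\subseteq Q(G)\subseteq P(G)$ together with the classification of $\mathbf{P}$--groups to pin down the structure of $G$ — and to compare this count with the one imposed by the averaging identity. I expect the discrepancy to push the mean strictly above $|G|/2$, contradicting $pd(G)=\frac12+\frac{r}{2}$, exactly as the cyclicity of $P(D_8)=Z(D_8)$ makes the mean equal to $6>4=|G|/2$ in Example \ref{d8}. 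Turning this heuristic into a uniform count valid for every group with cyclic $P(G)$ is the crux of the proof.
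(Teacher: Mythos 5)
There is a genuine gap, and you flag it yourself: the last paragraph of your proposal is not an argument but a declaration of intent. After reducing the hypothesis to the averaging identity $\sum_{X\in\mathcal{L}(G)-\mathcal{L}(P(G))}|P_G(X)|=\frac{|G|}{2}\,|\mathcal{L}(G)-\mathcal{L}(P(G))|$ and ruling out the pure ``all permutizers of index $p$'' regime, you write that you \emph{expect} the mixture of full and small permutizers to push the mean strictly above $|G|/2$ when $P(G)$ is cyclic, and that turning this into a uniform count ``is the crux of the proof.'' That crux is exactly what is missing: nothing in what you wrote uses the cyclicity of $P(G)$ quantitatively, so no contradiction is ever derived. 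Ruling out the extremal regime does not help, because the identity only constrains an average, and a priori many mixtures realise the mean $|G|/2$ (indeed normal subgroups outside $\mathcal{L}(P(G))$ always contribute the full value $|G|$, so some mixture is unavoidable). As written, the proposal proves nothing beyond reformulations of the hypothesis.

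For comparison, the paper makes cyclicity bite through the subgroup lattice of $P(G)$: if $P(G)$ is cyclic of order $k$ then $|\mathcal{L}(P(G))|=|\mathrm{Div}(k)|$ is small, and (borrowing from the proof of Theorem \ref{l:2} the assumption that $P_G(X)$ is proper, hence of order at most $|G|/p$, for every $X\in\mathcal{L}(G)-\mathcal{L}(P(G))$) the prescribed value $pd(G)=\frac12+\frac{|\mathcal{L}(P(G))|}{2|\mathcal{L}(G)|}$ is squeezed against the resulting upper bound to yield $|\mathcal{L}(G)|\le|\mathcal{L}(P(G))|$, forcing $G=P(G)$ and contradicting properness. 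Note that this route pays a price — it needs the extra properness hypothesis on the permutizers outside $\mathcal{L}(P(G))$ (the point addressed in the errata for Theorem \ref{l:2}) — but it does supply the quantitative use of cyclicity that your sketch lacks. If you want to complete your own route, you would need an analogous input: some bound tying $|\mathcal{L}(P(G))|$, or the number of subgroups with full permutizer, to the divisor structure of a cyclic $P(G)$.
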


\begin{proof}
By assumption we exclude the cases $P(G)=G$ and $P(G)=\{1\}$,
which are the extremal situations already known. Assume that
$P(G)$ is cyclic of prime order $q\ge p \ge 2$, where $p$ is the
smallest prime dividing  $|G|$.  We may apply the arguments of the
proof of Theorem \ref{l:2} and, noting that
$|\mathcal{L}(P(G))|=2$, we find that  
$$pd(G)=\frac{1}{2}+\frac{|\mathcal{L}(P(G))|}{2 \ |\mathcal{L}(G)|}=
\frac{1}{2}+\frac{1}{|\mathcal{L}(G)|}=\frac{|\mathcal{L}(G)|+2}{2|\mathcal{L}(G)|} \le \frac{2 \ |G|}{|G| \
|\mathcal{L}(G)|} + \frac{|G| \ (|\mathcal{L}(G)|-2)}{p \ |G| \ |\mathcal{L}(G)|}$$
$$=\frac{(2p  +
|\mathcal{L}(G)|-2) \ |G|}{p \ |G| \ |\mathcal{L}(G)|}$$
and then the inequality
$$\frac{|\mathcal{L}(G)|+2}{2}=\frac{|\mathcal{L}(G)|}{2}+1\le \frac{(2p-2)}{p}+ \frac{|\mathcal{L}(G)|}{p}$$
which means
$$\frac{|\mathcal{L}(G)|}{2}-\frac{|\mathcal{L}(G)}{p}=\frac{(p-2) |\mathcal{L}(G)|}{2p}\le 1-
\frac{2}{p}=\frac{p-2}{p}.$$
From this we derive the contradiction  $\frac{|\mathcal{L}(G)|}{2}\le 1$, 
as at least  $\{1\}$ and $G$ are contained in $\mathcal{L}(G)$. 
Therefore, $P(G)$ cannot be
cyclic of prime order and we may assume that $P(G)$ is cyclic of
order $k\ge2$. Now, we note that
$|\mathcal{L}(P(G))|=|\mathrm{Div}(k)|$, where $\mathrm{Div}(k)$
is the set of all divisors of $k$. Here the argument we just used
for $q$ may still be applied. In fact we have
$$\frac{1}{2}+\frac{|\mathrm{Div}(k)|}{2 \ |\mathcal{L}(G)|}=\frac{|\mathcal{L}(G)|+|\mathrm{Div}(k)|}{2|\mathcal{L}(G)|}\le
\frac{|\mathrm{Div}(k)| \ |G|}{|G| \ |\mathcal{L}(G)|} + \frac{|G| \ (|\mathcal{L}(G)|-|\mathrm{Div}(k)|)}{p \
|G| \ |\mathcal{L}(G)|}$$
$$=\frac{(|\mathrm{Div}(k)| \ p  + |\mathcal{L}(G)|-|\mathrm{Div}(k)|) \ |G|}{p \ |G| \
|\mathcal{L}(G)|}$$
then
$$\frac{|\mathcal{L}(G)|+|\mathrm{Div}(k)|}{2}=\frac{|\mathcal{L}(G)|}{2}+\frac{|\mathrm{Div}(k)|}{2}\le
\frac{(p-1)|\mathrm{Div}(k)|}{p}+ \frac{|\mathcal{L}(G)|}{p}$$
which means
$$
\frac{|\mathcal{L}(G)|}{2}-\frac{|\mathcal{L}(G)}{p}=\frac{(p-2) |\mathcal{L}(G)|}{2p}\le \frac{(p-1)
|\mathrm{Div}(k)|}{p}$$
and this would imply that $|\mathcal{L}(G)|\le |\mathrm{Div}(k)|=|\mathcal{L}(P(G))|$, that is, $\mathcal{L}(G)\subseteq
\mathcal{L}(P(G))$ and then $\mathcal{L}(G)=\mathcal{L}(P(G))$. This condition implies $G=P(G)$, a
contradiction.
\end{proof}

The reader may note that Theorem \ref{t:1} describes a very
general situation, which cannot be reduced to those in Examples
\ref{d6} and \ref{d8}. In fact, looking at Example \ref{d6},
$P(D_6)=D_6$ and so $P(D_6)$ is not a proper subgroup of $D_6$,
and this means that one of the assumptions of Theorem \ref{t:1} is
not satisfied. On the other hand,  $P(D_8)=Z(D_8)$ is a
nontrivial proper subgroup of $D_8$ (see Example \ref{d8}), but
$\frac{4}{5}=pd(D_8) \not= \frac{1}{2}+ \frac{1}{10}=\frac{3}{5}$,
and in fact $P(D_8)$ is cyclic. Once again, Theorem \ref{t:1} can
not be applied. These two examples show that we cannot strengthen
further Theorem \ref{t:1}.

However, we may detect groups $G$ with cyclic $P(G)$. The
following result shows this circumstance.

\begin{thm}\label{t:extra}
Let $P(G)$  be a  nontrivial
 proper subgroup of a group $G$ with $pd(G)=\frac{4}{5}$ and $p$  be
 the smallest prime divisor of $|G|$. If 
 $\frac{4|G|-5p}{5|G|-5p}\le \frac{2}{|\mathcal{L}(G)|},$
 then $P(G)$ is cyclic of prime order.
\end{thm}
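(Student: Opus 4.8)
The plan is to read everything off the lower bound already proved in Theorem \ref{t:2}, converting the numerical hypothesis on $\frac{4|G|-5p}{5|G|-5p}$ into a sharp upper bound on the size of $\mathcal{L}(P(G))$, and then to identify which groups have a subgroup lattice that small.

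First I would invoke the lower bound of Theorem \ref{t:2}, which applies since $pd(G)=\frac{4}{5}\neq 1$. Writing $g=|G|$, $n=|\mathcal{L}(G)|$ and $\ell=|\mathcal{L}(P(G))|$, it reads $\left(1-\frac{p}{g}\right)\frac{\ell}{n}+\frac{p}{g}\le \frac{4}{5}$. Because $P(G)$ is a proper nontrivial subgroup, $G$ is not of prime order and hence $g>p$, so $g-p>0$ and I may clear denominators to obtain $\frac{(g-p)\ell}{n}\le \frac{4g-5p}{5}$. The left-hand side is strictly positive, which incidentally forces $4g-5p>0$ and keeps the chain of inequalities consistent.

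Next I would solve for $\ell$, dividing by the positive quantity $g-p$, to get $\ell\le n\cdot\frac{4g-5p}{5g-5p}$. At this point the hypothesis $\frac{4g-5p}{5g-5p}\le \frac{2}{n}$ does all the remaining work: it yields $\ell\le n\cdot\frac{2}{n}=2$.

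Finally, since $P(G)$ is nontrivial, its subgroup lattice contains at least the two subgroups $\{1\}$ and $P(G)$, so $\ell\ge 2$; combined with the bound above this forces $\ell=|\mathcal{L}(P(G))|=2$. A group whose subgroup lattice has exactly two elements has no proper nontrivial subgroups, hence is cyclic of prime order, and the conclusion follows. There is no genuine obstacle here: the only points requiring care are the justification that $g>p$ (so that the division by $g-p$ is legitimate) and the remark that the positivity of the left-hand side is compatible with $4g-5p>0$; everything else is routine arithmetic on the bound supplied by Theorem \ref{t:2}.
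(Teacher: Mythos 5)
Your proof is correct and follows essentially the same route as the paper: both start from the lower bound of Theorem \ref{t:2}, rearrange it into $\frac{|\mathcal{L}(P(G))|}{|\mathcal{L}(G)|}\le\frac{4|G|-5p}{5|G|-5p}$, apply the hypothesis to get $|\mathcal{L}(P(G))|\le 2$, and conclude that $P(G)$ is cyclic of prime order. Your added remarks on why $|G|>p$ and why $|\mathcal{L}(P(G))|\ge 2$ are harmless elaborations of points the paper leaves implicit.
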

\begin{proof}From the lower bound of Theorem \ref{t:2}, we have
$$pd(G)=\frac{4}{5} \ge \left(1-\frac{p}{|G|}\right) \frac{|\mathcal{L}(P(G))|}{|\mathcal{L}(G)|}+\frac{p}{|G|} \Leftrightarrow \frac{\frac{4}{5}}{1-\frac{p}{|G|}} - \frac{\frac{p}{|G|}}{1-\frac{p}{|G|}}\ge \frac{|\mathcal{L}(P(G))|}{|\mathcal{L}(G)|}$$
$$\Leftrightarrow \frac{\frac{4|G|-5p}{5|G|}}{\frac{|G|-p}{|G|}} \ge \frac{|\mathcal{L}(P(G))|}{|\mathcal{L}(G)|} \Leftrightarrow \frac{4|G|-5p}{5|G|-5p}\ge \frac{|\mathcal{L}(P(G))|}{|\mathcal{L}(G)|}.$$
We conclude that $\frac{|\mathcal{L}(P(G))|}{|\mathcal{L}(G)|} \le \frac{2}{|\mathcal{L}(G)|},$ hence $|\mathcal{L}(P(G))| \le 2$. This forces $P(G)$ to be cyclic of prime order.
 \end{proof}

\section{Computations for dihedral groups}

We describe an instructive example, which correlates most of the notions
which we have seen until now.

\begin{prop}\label{new1} Let $p$ be an odd prime. Then
$$1=pd(D_{2p}) > sd(D_{2p})=\frac{7p^3-5p^2-11p+9}{p^4+4p^3-2p^2-12p+9}>\frac{p+3}{4p}=d(D_{2p}).$$
\end{prop}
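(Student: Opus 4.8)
The plan is to establish the three quantities separately and then chain the inequalities. For the leftmost equality $pd(D_{2p})=1$, I would invoke the structure of $\mathcal{L}(D_{2p})$ displayed in Fig.III.3: the proper nontrivial subgroups are the unique cyclic group $C_p=\langle y\rangle$ of order $p$ and the $p$ reflections $H_1,\dots,H_p\simeq C_2$. Since $C_p$ is normal (index $2$) and $\{1\}$, $D_{2p}$ are trivially handled, each of these has permutizer equal to $D_{2p}$. For a reflection $H_i=\langle x_i\rangle$, the cyclic subgroup $C_p$ permutes with it because $C_p$ is normal, so $C_pH_i=H_iC_p$ is a subgroup of order $2p$, forcing $P_{D_{2p}}(H_i)=D_{2p}$; this is exactly the argument already carried out for $S_3\simeq D_6$ in Example~\ref{d6}. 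Hence every subgroup has permutizer equal to the whole group and $pd(D_{2p})=1$ by Remark~\ref{rem2}, since every proper subgroup is in fact maximal here.

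For the two strict inequalities, my approach is to treat the explicit formulas for $sd(D_{2p})$ and $d(D_{2p})$ as known (they are quoted from \cite{mt} and \cite{elr,er3}, and the formula $|\mathcal{L}(D_{2n})|=\sigma(n)+\tau(n)$ from \eqref{lattice} gives $|\mathcal{L}(D_{2p})|=p+3$, consistent with Fig.III.3), so that the Proposition reduces to verifying two rational-function inequalities in the single variable $p$ ranging over odd primes $p\ge3$. First I would confirm $pd(D_{2p})=1 > sd(D_{2p})$ by showing the numerator is strictly smaller than the denominator of the fraction for $sd$, i.e. that $p^4+4p^3-2p^2-12p+9-(7p^3-5p^2-11p+9)=p^4-3p^3+3p^2-p=p(p-1)^3>0$, which is immediate for $p\ge3$; this factorization is the clean observation that makes the first inequality transparent.

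The main obstacle is the middle inequality $sd(D_{2p})>d(D_{2p})$, which compares two genuine rational functions rather than one against $1$. Here I would cross-multiply to reduce to a polynomial inequality: clearing denominators, $sd>d$ becomes
$$4p(7p^3-5p^2-11p+9) > (p+3)(p^4+4p^3-2p^2-12p+9).$$
After expanding both sides and moving everything to one side, the claim is that the resulting degree-five polynomial in $p$ is negative (or positive, depending on orientation) for all $p\ge3$. The real work is the bookkeeping of this expansion and then arguing the sign; I expect the difference to factor with a repeated root structure analogous to the $p(p-1)^3$ seen above, and I would look for such a factorization to avoid a brute estimate. If a clean factorization is not forthcoming, the fallback is to observe that the leading behaviour favours one side and then verify finitely many small cases by hand, or bound the lower-order terms, but a factorization into nonnegative-on-$[3,\infty)$ pieces is the cleanest route and the step most likely to require care.

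Finally, I would note that strictness throughout uses $p\ge3$ crucially (each factor $(p-1)$ or $(p-3)$ etc. is positive there), and that the chain of two strict inequalities together with the equality $pd=1$ yields the displayed three-term comparison, completing the proof.
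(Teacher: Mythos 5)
Your treatment of $pd(D_{2p})=1$ is essentially the paper's: the paper argues via the $\mathbf{P}$--group property ($Q_\infty(D_{2p})=D_{2p}$) together with the fact that every proper subgroup of the diamond lattice is maximal, which is exactly your appeal to Remark~\ref{rem2}; your direct verification using the normality of $C_p$ is equally valid. Your reduction of $1>sd(D_{2p})$ to $p^4+4p^3-2p^2-12p+9-(7p^3-5p^2-11p+9)=p(p-1)^3>0$ is correct and in fact more explicit than the paper, which never checks this comparison separately.

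The genuine gap is the middle inequality $sd(D_{2p})>d(D_{2p})$, which you rightly identify as the main obstacle but leave unexecuted. Carrying out the cross-multiplication you propose, the difference $4p(7p^3-5p^2-11p+9)-(p+3)(p^4+4p^3-2p^2-12p+9)$ equals $-(p^5-21p^4+30p^3+26p^2-63p+27)$. This quintic does \emph{not} have constant sign on $[3,\infty)$: the bracketed polynomial is negative for $p=3,5,\dots,19$ but positive for every $p\ge 21$ (at $p=23$ it equals $937024$), so the factorization into factors nonnegative on $[3,\infty)$ that you hope for cannot exist, and your fallback of ``leading behaviour plus finitely many small cases'' points the wrong way: the leading term of the difference is $-p^5$, so in fact $d(D_{2p})>sd(D_{2p})$ for all primes $p\ge 23$ (e.g.\ $sd(D_{46})=82280/327184\approx 0.2515$ while $d(D_{46})=26/92\approx 0.2826$). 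The paper's own proof simply asserts $0>p^5-21p^4+30p^3+26p^2-63p+27$ ``for all odd primes'' and chains equivalences back to the desired inequality; that assertion is false for $p\ge 23$, so the Proposition as stated holds only for $3\le p\le 19$. In short, the step you flagged as requiring care cannot be completed as written because the targeted inequality fails for large primes; executing your own fallback honestly would have exposed this.
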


\begin{proof}
Noting  that $D_{2p}=C_2 \ltimes C_p$ (see \eqref{dihedral}) and that $\mathcal{L}(D_{2p})$ forms a diamond
(as in Fig.III.3), we conclude that $Z(D_{2p})$ is trivial and
$C_{D_{2p}}(C_p)=C_p$ is the unique maximal normal subgroup of
$D_{2p}$. Moreover $D_{2p}$ is a $\mathbf{P}$--group, because
$Q_\infty(D_{2p})=D_{2p}$. Therefore a proper subgroup $H$ of
$D_{2p}$ should be properly contained in $P_{D_{2p}}(H)$ and
necessarily $P_{D_{2p}}(H)=D_{2p}$. Thus, we find that
$pd(D_{2p})=1$.

On the other hand, we  may specialize the
formula
$$sd(D_{2p})=\frac{\tau(p)^2+2\tau(p)\sigma(p)+g(p)}{(\tau(p)+\sigma(p))^2},$$
given in \cite[Theorem 3.1.1]{mt}, where
$$g(p)=\frac{3p^3-5p^2+p+1}{p^2-2p+1}$$ is the arithmetic function
in \cite[Eq. 10, p.2514]{mt}. From \eqref{st} we deduce
$\tau(p)=2$ and $\sigma(p)=p+1$ so that
$$sd(D_{2p})=\frac{4+2 \cdot 2 \cdot (p+1)+\frac{3p^3-5p^2+p+1}{p^2-2p+1}}{p^2+6p+9}
=\frac{8+4p+\frac{3p^3-5p^2+p+1}{p^2-2p+1}}{p^2+6p+9}$$
$$=\frac{\frac{8p^2-16p+8+4p^3-8p^2+4p+3p^3-5p^2+p+1}{p^2-2p+1}}{p^2+6p+9}$$$$=\frac{7p^3-5p^2-11p+9}{(p^2+6p+9)(p^2-2p+1)}=\frac{7p^3-5p^2-11p+9}{p^4+4p^3-2p^2-12p+9}.$$
Now \cite[Remark 4.2]{les2} shows that $d(D_{2p})=\frac{p+3}{4p}$ and for all odd primes  we have
\[0>p^5-21p^4+30p^3+26p^2-63p+27\]
\[ \Leftrightarrow 28p^4-20p^3-44p^2+36p>p^5+7p^4+10p^3-18p^2-27p+27\]
\[ \Leftrightarrow 28p^4-20p^3-44p^2+36p>p^5+4p^4-2p^3-12p^2+9p+3p^4+12p^3-6p^2-36p+27\]
\[\Leftrightarrow 4p(7p^3-5p^2-11p+9)>(p+3)(p^4+4p^3-2p^2-12p+9)\]
\[\Leftrightarrow \frac{7p^3-5p^2-11p+9}{p^4+4p^3-2p^2-12p+9}>\frac{p+3}{4p}.\]
The result follows.
\end{proof}

From  Proposition \ref{new1}, $pd(D_{2p})$ has
a constant value for all odd primes, while $sd(D_{2p})$ and
$d(D_{2p})$ are functions of $p$. This is an important difference
of the permutability degree with respect to the subgroup
commutativity degree and the commutativity degree. This reflects the fact that we are looking at
permutizers in a group, and not at centralizers.

\section{Some open questions}

We end with a series of open questions. They arise naturally from the study of the present subject. The first is motivated by the families of dihedral groups, analyzed in Section  6 (and in other parts of the present paper). Most of these groups may be described  in terms of product of groups.

\begin{oq} Let $G=NH$ be a product of a normal subgroup $N$ by a subgroup $H$. What can be said about the permutability degree of $G$ ?
\end{oq}

It is well known  that most of dihedral groups, generalized quaternion groups and semidihedral groups has this structure. They present further analogies in terms of central quotients: one, for instance, is that $D_8/Z(D_8) \simeq Q_8$ and, roughly speaking, one can generalize this isomorphism to $Q_{2^4}=Q_{16}$, $Q_{2^5}=Q_{32}$ and so on. The reader may refer to  \cite{hr1, hr2} for  recent studies on these groups. Therefore:

\begin{oq} What is  the permutability degree of  generalized
quaternion groups and semidihedral groups ?
\end{oq}

There is also another question, which is more general and may require some computational efforts. A classical result of Cayley allows us to embedd a  group in a suitable symmetric group. The knowledge of symmetric groups plays in fact a fundamental role in several aspects of the theory of groups. Therefore:

\begin{oq} What is the permutability degree of the symmetric group $S_n$ on $n$ objects?
\end{oq}

Finally, the classification of (finite) simple groups may provide interesting aspects of study. 
For the (finite) simple (and almost simple) groups of sporadic type a lot is
known about their subgroup lattices, see \cite{connor}. Therefore

\begin{oq}What is the permutability degree of  (finite) simple groups ?
\end{oq}

\section*{Acknowledgements}
The first author was partially supported by GNSAGA of INdAM (Italy). 
The research of the second author is supported in part by NRF  (South Africa) for the Grant No. 93652 and in part from the Launching Grant  No. 459235 of the University of Cape Town (South Africa). He also thanks Universidade Federal  do Rio de Janeiro (Rio de Janeiro, Brazil), where  the present subject was originally investigated via the project CAPES No. 061/2013. Finally, we thank the editor for his valuable work and the referee for the questions in Section 7.

\newpage

\section{ERRATA CORRIGE  -- 10TH OF SEPTEMBER 2017}

After the publication of the previous results in:\\
\\
 D.E. Otera and F.G. Russo, Permutability degrees of finite groups, Filomat 30 (2016), 2165--2175. \\
\\
some errors have been reported to our attention, thanks to the communication of some colleagues  (in particular we thank Prof. Paz Jim\'enez Seral). As noted below, the main results are not directly involved, but  we cannot avoid to give more details  on some points which might be ambiguous if we don't do it.

\begin{itemize}
\item[(1)] Definition of $P(G)$ in Introduction. Two lines below. Replace the sentence ``Note that for any $X \in \mathcal{L}(P(G))$ one has $P_G(X)=G$'' by ``Note that when $X=P(G)$ one has $P_G(X)=P(G)$''.
\item[(2)] The value of $pd(D_8)$ in Example 3.2 is wrong. In fact $pd(D_8)=1$, in agreement with  Proposition 6.1 when we show that $pd(D_{2p})=1$ for dihedral groups (of order $2p$ with $p$ odd). The error of Example 3.2 is here:
$$4=|M_1|=|P_{D_8}(H)|=|P_{D_8}(K)|=|P_{D_8}(V)|=|P_{D_8}(U)|,$$
 in fact we must replace this computation with
$$8=|P_{D_8}(H)|=|P_{D_8}(K)|=|P_{D_8}(V)|=|P_{D_8}(U)|.$$
Consequently, $P(D_8)=D_8$ and  $P(D_8) \neq Q(D_8)$. In particular, the final two sentences from ``This example shows ...'' until the end of Example 3.2 must be  removed. In addition,
\begin{itemize}
\item[(a)] the final sentence of Remark 2.3 must be removed;
\item[(b)] the sentence just before Theorem 5.1 must be removed;
\item[(c)] the sentences before Theorem 5.2 ``Of course, $D_8$ ... $P(D_8) \simeq C_2$ '' must be removed.
\item[(d)] the sentences ``On the other hand... further Theorem 5.2'' before Theorem 5.3 must be removed.
\end{itemize}

\item[(3)] Proof of the lower bound in Theorem 4.1. There is a misprint: it must be 
$$\sum_{X \in \mathcal{L}(G)}|P_H(X)| \ge \sum_{X \in \mathcal{L}(H)}|P_H(X)|$$

\item[(4)]In Theorem 4.3 we use an assumption, not justified properly in the proof, but this may be motivated by evidences of computational nature. Therefore Theorem 4.3 must be reformulated in the following way:

\begin{thm} Let $G$ be a noncyclic group and $p$  the smallest prime divisor of $|G|$. If  $P_G(X)$ is a proper subgroup of $G$ for all $X \in \mathcal{L}(G)$,  $|P(G)|=p$ and $|\mathcal{L}(G)|=m+\frac{p^{m+1}+p-2}{p-1}$ for some $m\ge0$, then
$$pd(G)\leq \frac{p^{m+1}+2p^2+(m-3)p-m}{p^{m+2}+(m+1)p^2-(m+2)p}.$$
\end{thm}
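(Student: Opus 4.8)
The plan is to exploit the decomposition \eqref{e:1} of the permutability degree together with the two hypotheses that control the two summands separately. First I would record the structural consequence of $|P(G)|=p$: since $P(G)$ has prime order, its subgroup lattice consists only of the trivial subgroup and $P(G)$ itself, so $|\mathcal{L}(P(G))|=2$. This pins down the first sum in \eqref{e:1}, because for every $X\in\mathcal{L}(P(G))$ one has $P_G(X)=G$, giving a contribution of exactly $2|G|$. This is the clean, non-negotiable part of the count.

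Next I would bound the second sum, the one over $X\in\mathcal{L}(G)-\mathcal{L}(P(G))$. Here the newly-added hypothesis that $P_G(X)$ is a proper subgroup of $G$ for \emph{all} $X\in\mathcal{L}(G)$ does the essential work: together with the minimality of $p$ as a prime divisor of $|G|$, properness forces $|G:P_G(X)|\ge p$, hence $|P_G(X)|\le |G|/p$ for each such $X$. Summing this uniform bound over the $|\mathcal{L}(G)|-2$ subgroups outside $\mathcal{L}(P(G))$ yields an upper estimate of $\frac{|G|}{p}\,(|\mathcal{L}(G)|-2)$ for the second sum. Combining, I obtain
\[
|G|\,|\mathcal{L}(G)|\,pd(G)\ \le\ 2|G|+\frac{|G|}{p}\bigl(|\mathcal{L}(G)|-2\bigr).
\]

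It then remains to substitute the prescribed value $|\mathcal{L}(G)|=m+\frac{p^{m+1}+p-2}{p-1}$ and simplify. I would write $|\mathcal{L}(G)|=\frac{mp-m+p^{m+1}+p-2}{p-1}$ to get a common denominator, divide through by $|G|$, and then algebraically combine the $2$ and the $\frac{1}{p}(|\mathcal{L}(G)|-2)$ terms over $p(p-1)$; the numerator collapses, after cancellation of the $\pm 2p$ and $\pm 2$ terms, to $p^{m+1}+2p^2+(m-3)p-m$, while the denominator becomes $p\,|\mathcal{L}(G)|\cdot(p-1)=p^{m+2}+(m+1)p^2-(m+2)p$. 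This reproduces the claimed bound exactly.

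The one point that genuinely needs the amended hypothesis — and which the original Theorem \ref{l:2} glossed over — is the \emph{uniform} properness of $P_G(X)$. Without assuming $P_G(X)\neq G$ for every $X$, the step $|P_G(X)|\le |G|/p$ can fail for subgroups lying outside $\mathcal{L}(P(G))$ (indeed $S_3$ and the corrected $D_8$ show $P_G(X)=G$ can occur for non-central $X$), and the noncyclicity of $G$ alone only rules out $P_G(X)=G$ when combined with $|P(G)|=p$ for $X\in\mathcal{L}(P(G))$, not for the remaining subgroups. So I expect the main obstacle to be purely conceptual rather than computational: one must recognise that the clean telescoping of the numerator is a bookkeeping exercise, whereas the validity of the per-subgroup bound $|P_G(X)|\le |G|/p$ rests entirely on the added assumption, which is precisely the correction flagged in point (4) of the errata.
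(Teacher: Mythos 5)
Your proposal is correct and follows essentially the same route as the paper's (corrected) proof: $|P(G)|=p$ gives $|\mathcal{L}(P(G))|=2$, the two subgroups in $\mathcal{L}(P(G))$ contribute at most $2|G|$, the added properness hypothesis together with the minimality of $p$ gives $|P_G(X)|\le |G|/p$ for the remaining $|\mathcal{L}(G)|-2$ subgroups, and the substitution of $|\mathcal{L}(G)|=m+\frac{p^{m+1}+p-2}{p-1}$ yields the stated bound. You also correctly isolate the point of the errata, namely that the uniform bound $|P_G(X)|\le |G|/p$ is exactly what the amended hypothesis is there to secure.
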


The proof  is  that of Theorem 4.3 without the first two sentences.


\end{itemize}

\end{document}